\theoremstyle{plain}
\newtheorem{thm}{Theorem}[section]
\newtheorem{cor}{Corollary}
\newtheorem{lem}{Lemma}[section]
\newtheorem{rem}{Remark}
\newtheorem{prop}{Proposition}[section]
\numberwithin{equation}{section}
\DeclareMathOperator{\re}{Re}
\DeclareMathOperator{\im}{Im}
\begin{document}
\def\R{{\mathbb R}}
\def\C{{\mathbb C}}
\def\N{{\mathbb N}}
\def\DD{{\mathbb D}}
\def\S{{\mathbb S}}
\def\rr{{\cal R}}
\def\e{\emptyset}
\def\dQ{\partial Q}
\def\dk{\partial K}
\def\endofproof{{\rule{6pt}{6pt}}}
\def\di{\displaystyle}
\def\dist{\mbox{\rm dist}}
\def\u-{\overline{u}}
\def\du{\frac{\partial}{\partial u}}
\def\dv{\frac{\partial}{\partial v}}
\def\dt{\frac{d}{d t}}
\def\dx{\frac{\partial}{\partial x}}
\def\con{\mbox{\rm const }}
\def\Box{\spadesuit}
\def\ii{{\bf i}}
\def\curl{{\rm curl}\,}
\def\dive{{\rm div}\,}
\def\grad{{\rm grad}\,}
\def\dist{\mbox{\rm dist}}
\def\pr{\mbox{\rm pr}}
\def\pp{{\cal P}}
\def\supp{\mbox{\rm supp}}
\def\Arg{\mbox{\rm Arg}}
\def\In{\mbox{\rm Int}}
\def\Re{\mbox{\rm Re}\:}
\def\li{\mbox{\rm li}}
\def\ep{\epsilon}
\def\tr{\tilde{R}}
\def\be{\begin{equation}}
\def\ee{\end{equation}}
\def\cn{{\mathcal N}}
\def\sn{{\mathbb  S}^{n-1}}
\def\Ker {{\rm Ker}\:}
\def\el{E_{\lambda}}
\def\Rc{{\mathcal R}}
\def\Ha{H_0^{ac}}
\def\la{\langle}
\def\ra{\rangle}
\def\Ko{\Ker G_0}
\def\Kd{\Ker G_b}
\def\hc{{\mathcal H}}
\def\caH{\hc}
\def\caO{{\mathcal O}}
\def\la{\langle}
\def\ra{\rangle}
\def\sp{\sigma_{+}}
\def\pa{\partial}
\def\et{|\eta'|^2}
\def\lg{L^2(\Gamma)}
\def\h1{H^1_h(\Gamma)}
\def\fr{\frac}
\def\rn{R_n\Bigl(-\fr{1}{2\lambda}\Bigr)}
\def\drn{R_n'\Bigl(-\fr{1}{2\lambda}\Bigr)}
\def\il{-\ii \lambda}
\def\2l{-\fr{1}{2\lambda}}
\def\dd{\fr{d}{d\lambda}}
\def\oc{{\mathcal O}}
\def\hn{h^{(1)}_n}
\def\hnn{h^{(1)}_{n+1}}
\title[Weyl formula] {Weyl formula for the negative dissipative eigenvalues of Maxwell's equations}

\author[F. Colombini] {Ferruccio Colombini}
\address{Dipartimento di Matematica, Universit\`a di Pisa, Italia}
\email{colombini@dm.unipi.it}
\author[V. Petkov]{Vesselin Petkov}
\address{Institut de Math\'ematiques de Bordeaux, 351,
Cours de la Lib\'eration, 33405  Talence, France}
\email{petkov@math.u-bordeaux.fr}

 \keywords{ Dissipative boundary conditions, Counting function, Weyl formula}

\begin{abstract}
 Let $V(t) = e^{tG_b},\: t \geq 0,$ be the semigroup
  generated by Maxwell's equations  in an exterior domain $\Omega \subset \R^3$  with dissipative boundary condition $E_{tan}- \gamma(x) (\nu \wedge B_{tan}) = 0, \gamma(x) > 0, \forall x \in \Gamma = \pa \Omega.$  We study the case when $\Omega = \{x \in \R^3:\: |x| > 1\}$ and $\gamma \neq 1$ is a constant. We establish a Weyl formula for the counting function of the negative real eigenvalues of $G_b.$

\end{abstract}

\maketitle

\section{Introduction}

Let $K \subset \{ x\in \R^3: \: |x| \leq a\}$ be an open connected domain 
and let $\Omega = \R^3 \setminus \bar{K}$  
be connected  domain with $C^{\infty}$ smooth boundary $\Gamma$.
Consider the boundary problem
\begin{equation}  \label{eq:1.1}
\begin{aligned} 
&\partial_t E = \curl B,\qquad \partial_t B = -\curl E \quad {\rm in}\quad \R_t^+ \times \Omega,
\\
&E_{tan} - \gamma(x)(\nu \wedge B_{tan}) = 0 \quad{\rm on} \quad \R_t^+ \times \Gamma,
\\
&E(0, x) = E_0(x), \qquad B(0, x) = B_0(x). 
\end{aligned}
\end{equation}
with initial data $f = (E_0, B_0) \in  (L^2(\Omega))^6 = {\mathcal H}.$
Here $\nu(x)$ is the unit outward normal to $\partial \Omega$ at $x \in \Gamma$ pointing into $\Omega$, 
$\la\: , \ra$ denotes the scalar product in $\C^3$,
$u_{tan} := u - \la u, \nu\ra \nu$, and $\gamma(x) \in C^{\infty}(\Gamma)$ satisfies $\gamma(x) > 0$ for all $ x \in \Gamma.$ The solution of the problem (\ref{eq:1.1}) is described  by a contraction semigroup 
$$(E, B) = V(t)f = e^{tG_b} f,\: t \geq 0,$$
 where the generator
$G_b$ has  domain $D(G_b)$
which is the closure in the graph norm of functions $u = (v, w) \in (C_{(0)}^{\infty} (\R^3))^3 \times (C_{(0)}^{\infty} (\R^3))^3$ satisfying the boundary condition $v_{tan} - \gamma (\nu \wedge w_{tan}) = 0$ on $\Gamma.$ \\

In \cite{CPR1} it was proved that the spectrum of $G_b$ in the open half plan $\{ z \in \C:\: \re z < 0\}$
is formed by isolated eigenvalues with finite multiplicities. Note that if $G_b f = \lambda f$ with $\re \lambda < 0$, the solution $u(t, x) = V(t) f = e^{\lambda t} f(x) $ of (\ref{eq:1.1}) has exponentially decreasing global energy. Such solutions are called {\bf asymptotically disappearing} and they are very important for the inverse scattering problems (see \cite{CPR1}). In particular, the eigenvalues $\lambda$ with $\re \lambda \to -\infty$ imply  a very fast decay of the corresponding solutions. In \cite{CPR2} the existence of eigenvalues of $G_b$ has been studied for the ball $B_3 = \{x\in \R^3,\:|x| < 1\}$ assuming $\gamma$ constant. It was proved for $\gamma = 1$ there are no eigenvalues in $\{z \in \C: \re z < 0\}$, while for $\gamma \neq 1$ there is always an infinite number of real eigenvalues $\lambda_m$ and with exception of one they satisfy the estimate 
\begin{equation} \label{eq:1.2}
\lambda_m  
\ \leq\
 - \frac{1}{\max \{ (\gamma_0 - 1), \sqrt{\gamma_0 - 1}\}} = -c_0
 \,,
\end{equation}
where $\gamma_0 = \max\{\gamma, \frac{1}{\gamma}\}.$\\

In this Note we study the distribution of the negative eigenvalues and our purpose is to obtain a Weyl formula for the counting function
$$N(r) = \#\{\lambda \in \sigma_p(G_b) \cap \R^-: \: |\lambda| \leq r\},\: r > r_0(\gamma),$$
where every eigenvalues $\lambda_m$ is counted with its algebraic multiplicity given by
$${\rm mult}(\lambda_m)= {\rm rank}\: \frac{1}{2 \pi \ii} \int_{|\lambda_n - z|= \epsilon} (z - G_b)^{-1} dz,$$
where $0 < \epsilon \ll 1$. Our main result is the following

\begin{thm}
Let $\gamma \neq 1$ be a constant and let $\gamma_0 = \max \{\gamma, \frac{1}{\gamma}\}.$ Then the counting function $N(r)$ for the ball $B_3$ has the asymptotic
\begin{equation} \label{eq:1.3}
N(r) = (\gamma_0^2 - 1) r^2 + \oc_{\gamma}(r),\: r \geq r_0(\gamma) > c_0.
\end{equation}
\end{thm}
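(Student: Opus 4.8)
The plan is to reduce the eigenvalue problem for $G_b$ on the ball to a family of transcendental equations indexed by the angular degree $n$, solve them asymptotically, and then count solutions up to radius $r$. Because $\Omega$ is the exterior of the unit ball (or, for the counting result, the ball $B_3$), the vector fields $E,B$ admit a separation of variables using vector spherical harmonics. For each degree $n \geq 1$ the time-harmonic Maxwell system with eigenvalue $\lambda$ reduces the radial part to spherical Bessel functions, and the natural building block is the spherical Hankel function $\hn(z)$ evaluated at $z = \il$ (this is exactly why the macros \hn, \hnn, \rn and \il are set up in the preamble). Substituting the separated fields into the dissipative boundary condition $v_{tan} - \gamma(\nu \wedge w_{tan}) = 0$ on $\Gamma = \{|x| = 1\}$ produces, for each $n$, a scalar equation of the form $F_n(\lambda; \gamma) = 0$, where $F_n$ is built from $\hn(\il)$, $\hnn(\il)$ and their derivatives; the macro $\rn = R_n(\2l)$ suggests that after clearing the exponential/Hankel factors one works with the rational (reverse Bessel / Bessel polynomial) quantity $R_n$ whose argument is $-\tfrac{1}{2\lambda}$.

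First I would carry out the separation of variables explicitly and record, for each $n$, the two families of boundary conditions — the transverse-electric and transverse-magnetic polarizations — noting that only one polarization produces negative real eigenvalues (the other being governed by $\gamma$ versus $1/\gamma$). This is the step where the constant $\gamma_0 = \max\{\gamma, 1/\gamma\}$ enters: swapping the roles of $E$ and $B$ exchanges $\gamma \leftrightarrow 1/\gamma$, so the two polarizations contribute eigenvalue families governed by $\gamma$ and by $1/\gamma$, and taking the maximum is what ultimately produces the factor $\gamma_0^2 - 1$. Second, for each fixed $n$ I would show that the equation $F_n(\lambda;\gamma)=0$ has exactly a controlled number of negative real roots $\lambda$, and locate them. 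The key analytic input is the behaviour of $\hn(\il)$ for real $\lambda < 0$: for $\lambda \to -\infty$ the Hankel function has a clean exponential asymptotic, while for $|\lambda|$ comparable to $n$ one must use uniform (Debye-type) asymptotics of Bessel functions with respect to the order. I would reduce $F_n(\lambda;\gamma)=0$ to an equation of the shape $\rn = (\text{explicit function of }\gamma)$ and count its solutions by monotonicity/intermediate-value arguments, differentiating via $\dd$ and $\drn$ to control the number of sign changes.

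The heart of the proof is the uniform counting across all $n$ simultaneously. Having shown that degree $n$ contributes roughly $c(\gamma)\,n$ eigenvalues in $\{|\lambda| \leq r\}$ once $n \lesssim r$ and no eigenvalues once $n \gg r$ (with the transition window being the source of the remainder term), I would sum over $n$. Each degree-$n$ vector spherical harmonic space has multiplicity $2n+1$, so the counting function becomes a double sum of the form $N(r) = \sum_{n} (2n+1)\cdot\#\{\lambda:\ |\lambda|\le r,\ F_n(\lambda;\gamma)=0\}$, and the leading term $(\gamma_0^2-1)r^2$ arises from a Riemann-sum estimate $\sum_{n \lesssim \alpha r}(2n+1) \sim (\alpha r)^2$ with $\alpha$ determined by the threshold relating $n$ and $|\lambda|$ through the Bessel asymptotics; the calibration of $\alpha$ must reproduce the constant $\gamma_0^2 - 1$ exactly. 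I would then bound the error by $\oc_\gamma(r)$, tracking (i) the boundary layer $n \approx \alpha r$ where uniform asymptotics are least sharp, (ii) the algebraic-multiplicity question — one must verify these eigenvalues are algebraically simple (or of uniformly bounded multiplicity), so that the spectral-projection definition of $\mathrm{mult}(\lambda_m)$ agrees with the geometric count, and (iii) the finitely many small-$n$ or near-threshold exceptional roots, consistent with the estimate \eqref{eq:1.2} and the excluded exceptional eigenvalue.

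I expect the main obstacle to be precisely the uniform (in both $n$ and $\lambda$) analysis of the Bessel/Hankel asymptotics in the transition regime $|\lambda| \sim n$, since this is what pins down the leading constant and simultaneously governs the size of the remainder; getting the error to be genuinely $\oc_\gamma(r)$ rather than $\oc_\gamma(r\log r)$ will require a careful treatment of the root count near the threshold $n \approx \alpha r$ and a clean monotonicity statement for $\rn$ as a function of $\lambda$ at fixed $n$. A secondary technical point is confirming algebraic simplicity of the eigenvalues so that the Weyl count with algebraic multiplicities matches the root count of the $F_n$.
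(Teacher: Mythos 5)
Your outline follows the same skeleton as the paper (separation into vector spherical harmonics, two polarizations governed by $\gamma$ and $1/\gamma$, a per-degree root count, and the weighted sum $\sum(2n+1)$), but it contains a quantitative error that would derail the count, and it leaves the two decisive analytic steps unproved. The error: you assert that degree $n$ contributes ``roughly $c(\gamma)n$ eigenvalues in $\{|\lambda|\le r\}$ once $n\lesssim r$.'' In fact, for $\gamma>1$ the degree-$n$ boundary equation has \emph{exactly one} negative real root $\lambda_n$ (and the $1/\gamma$-polarization contributes none); if each $n$ really gave $\sim cn$ roots, the total would be of order $r^3$, contradicting both the theorem and your own final Riemann sum $\sum_{n\lesssim\alpha r}(2n+1)\sim(\alpha r)^2$, which tacitly assumes one root per $n$. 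The paper proves this exact count by substituting $w=-1/(2\lambda)$ and the Bessel-polynomial representation $h_n^{(1)}(-i\lambda)=(-i)^n\lambda^{-1}e^{\lambda}R_n(w)$ to rewrite the equation as $w^2R_n'(w)+\tfrac{1-\gamma}{2}R_n(w)=0$ and then applying Descartes' rule of signs; it also needs the Wronskian-type monotonicity of Proposition 3.1 to see that the roots $\lambda_n$ are strictly decreasing in $n$.

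The second gap is the calibration of your threshold $\alpha$. The constant $\gamma_0^2-1$ comes from the localization $\bigl|\lambda_n+\sqrt{n(n+1)/(\gamma^2-1)}\bigr|\le a(\gamma)$ with an error bounded independently of $n$; this $O(1)$ error is also exactly what makes the remainder $O_{\gamma}(r)$ rather than merely $o(r^2)$. You defer this to ``uniform Debye-type asymptotics in the transition regime $|\lambda|\sim n$,'' but the relevant regime is $|\lambda|\sim n/\sqrt{\gamma^2-1}$, and the paper instead approximates the exterior semiclassical Dirichlet-to-Neumann symbol by $\sqrt{1+n(n+1)/\lambda^2}$ with an $O(1/|\lambda|)$ error, reducing matters to the explicit model equation $\sqrt{1+n(n+1)/z^2}=\gamma$ and transferring its unique root to $g_n$ by Rouch\'e's theorem on a circle of fixed radius $a(\gamma)$. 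Finally, your treatment of multiplicity is off: the eigenvalues are neither simple nor of uniformly bounded multiplicity --- $\lambda_n$ has geometric multiplicity $2n+1$, and one must show the algebraic multiplicity does not exceed this (the paper's Lemma 5.1, via a resolvent representation whose coefficients are meromorphic with simple poles). Without (a) the exactly-one-root count, (b) the $O(1)$ localization of $\lambda_n$, and (c) the equality of algebraic and geometric multiplicities, the proposal does not yield the stated asymptotic.
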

 The proof of Theorem 1.1 is based on a precise analysis of the roots of the equation (\ref{eq:3.1}) involving spherical Hankel functions $\hn(\lambda)$ of first kind.  We show in Section 3 that for $\gamma > 1$ this equation has only one real root $\lambda_n < 0$. Moreover, we have $\lambda_{n+1} < \lambda_n,\: \forall n\in \N,$ so we have a decreasing sequence of eigenvalues. The geometric multiplicity of $\lambda_n$ is $2n + 1$.  Since $C_b$ is not a self-adjoint operator the geometric multiplicity could be less than the algebraic one. In our case these multiplicities coincide and the proof is based on a representation of $(G_b - z)^{-1}.$ To estimate $\lambda_n$ as $n \to \infty$, we apply an approximation of the exterior semiclassical Dirichlet to Neumann map for the operator $(h^2 \Delta + z)$ established in \cite{P} (see also \cite{V}) combined with an application of Rouch\'e theorem.\\

We conjecture that in the general case of strictly convex obstacles and $\min_{y \in \Gamma} \gamma(y) = \gamma_1 > 1$ we  have the asymptotic
$$N(r) = \frac{1}{4 \pi} \Bigl(\int_{\Gamma} (\gamma^2(y) - 1) dS_y\Bigr) r^{2} + \oc_{\gamma}(r),\: r \geq r_0(\gamma_0).$$

For the ball $B_3$ this agrees with (\ref{eq:1.3}).

\section{Boundary problem for Maxwell system}
\def\hn{h^{(1)}_n}

Our purpose is to study  the  eigenvalues of $G_b$ in case the obstacle is equal to the 
 ball $B_3 = \{x \in \R^3: |x| \leq 1\}$. Setting $\lambda = \ii \mu$, $\im \mu > 0,$ 
 an eigenfunction $(E, B) \neq 0$ of $G_b$ 
 satisfies
\begin{equation} \label{eq:2.1}
\curl E = - \ii \mu B,\qquad  \curl B = \ii \mu E.
\end{equation}
Replacing $B$ by $H = - B$ yields for $(E, H) \in (H^2(|x| \leq 1))^6$,
\begin{equation} \label{eq:2.2}
\begin{cases}\curl E =  \ii \mu H,\qquad  \curl H = -\ii \mu E,\quad {\rm for}\quad x\in B_3,  \\
E_{tan} + \gamma (\nu \wedge H_{tan})= 0,\quad {\rm for }\quad x \in \S^2.
\end{cases}
\end{equation}
Expand $E(x), H(x)$ in
 the spherical functions $Y_n^m(\omega), \: n = 0, 1, 2,..., \: |m| \leq n,\: \omega \in \S^2$ and the spherical Hankel functions of first kind
$$h^{(1)}_n(z): = \fr{H^{(1)}_{n+ 1/2}(z)}{\sqrt z}, n \geq 1$$

An application of Theorem 2.50 in \cite{KH} (in the notation of \cite{KH} it is necessary to replace $\omega$ by $\mu \in \C \setminus \{0\}$) says that the solution of the system (\ref{eq:2.2}) for $x =|x| \omega, r = |x|> 0, \omega = \frac{x}{r}$ has the form
\begin{eqnarray}\label{eq:2.3}
E(x) =  \sum_{n=1}^{\infty}\sum_{|m|\leq n}\Bigl[\alpha_n^m \sqrt{n (n+1)} \frac{\hn(\mu r)}{r} Y_n^m(\omega) \omega \nonumber\\
+ \frac{\alpha_n^m}{r} (r \hn(\mu r))' U_n^m (\omega) + \beta_n^m \hn(\mu) V_n^m(\omega)\Bigr],
 \end{eqnarray}
\begin{eqnarray}\label{eq:2.4}
H(x) = -\frac{1}{\ii \mu} \sum_{n=1}^{\infty}\sum_{|m|\leq n} \Bigl[\beta_n^m \sqrt{n(n+1)}\frac{\hn(\mu r)}{r} Y_n^m(\omega)\omega \nonumber \\
+ \frac{\beta_n^m}{r} (r \hn(\mu r))' U_n^m (\omega) + \mu^2\alpha_n^m \hn(\mu) V_n^m(\omega)\Bigr].
\end{eqnarray}
Here $U_n^m (\omega) = \frac{1}{\sqrt{n(n+1)}} \grad_{\S^2} Y_n^m(\omega)$ and $V_n^m(\omega) = \nu \wedge U_n^m(\omega)$ for $ n \in \N, -n \leq m \leq n$ form a complete orthonormal basis in
$$L^2_t(\S^2) = \{ u(\omega) \in (L^2(\S^2))^3:\: \la \omega, u(\omega) \ra = 0\: {\rm on}\: \S^2\}.$$
To find a representation of $\nu \wedge H_{tan}$,  
observe that $\nu \wedge (\nu \wedge U_n^m) = - U_n^m,$ so for $r = 1$ one has
$$(\nu \wedge H_{tan})(\omega) =  -\frac{1}{\ii \mu} \sum_{n=1}^{\infty}\sum_{|m|\leq n} \Bigl[\beta_n^m \Bigl(\hn(\mu) + \frac{d}{dr} \hn(\mu r)\vert_{r = 1}\Bigr)V_n^m(\omega)$$
$$ - \mu^2\alpha_n^m \hn(\mu) U_n^m(\omega)\Bigr]$$
and the boundary condition in (\ref{eq:2.2}) is satisfied if
\begin{equation} \label{eq:2.5}
\alpha_n^m \Bigl[ \hn(\mu) + \frac{d}{dr}(\hn(\mu r))\vert_{r = 1} - \gamma \ii \mu \hn(\mu)\Bigr] = 0,\: \forall n \in \N, \: |m|\leq n,
\end{equation}
\begin{equation}\label{eq:2.6}
-\frac{\beta_n^m \gamma}{\ii \mu}\Bigl[  \hn(\mu) + \frac{d}{dr}(\hn(\mu r))\vert_{r = 1} - \frac{\ii \mu}{\gamma} \hn(\mu)\Bigr] = 0,\: \forall n \in \N, \: |m|\leq n.
\end{equation}

\section{Roots of the equation $g_n(\lambda) = 0$}

To examine the eigenvalues of $G_b$ it is necessary to find the roots of the equations (\ref{eq:2.3}) and (\ref{eq:2.4}). Since
$\hn(\mu) \neq 0$ for $\im \mu >0$, the problem is reduced to  study the roots $\lambda \in \R^{-}$ of the equation
\begin{equation} \label{eq:3.1}
1 + \frac{d}{dr} \hn(-\ii \lambda r)\Bigl\vert_{r = 1} (\hn(-\ii\lambda))^{-1} - \lambda \gamma = 0
\end{equation}
and the same equation with $\gamma$ replaced by $\fr{1}{\gamma}$. Clearly, if $\mu = -\ii\lambda$ is such that the expressions in the brackets $[...]$ in (\ref{eq:2.5}) and (\ref{eq:2.6}) are non-vanishing for every $n \geq 1$, we must have $\alpha_n^m = \beta_n^m = 0$ which implies $E_{tan} = B_{tan} = 0$. Hence $(E, B) = 0$ because the boundary problem with $\gamma = 0$ has no eigenvalues in $\{z\in \C:\:\re z < 0\}.$ 
In this section we suppose that $\gamma \neq 1$ and examine the equation
\begin{equation} \label{eq:3.2}
g_n(\lambda): = \frac{1}{\lambda} + \dd \Bigl( \hn(-\ii\lambda)\Bigr)(\hn(-\ii \lambda))^{-1} - \gamma = 0.
\end{equation}
It is well known that (see \cite{O}) 
$$\hn(-\ii \lambda) = (-\ii)^{n+1}\fr{e^{\lambda}}{-\ii \lambda}R_n\Bigl(\fr{\ii}{-2 \ii \lambda}\Bigr) = (-\ii)^{n}\fr{e^{\lambda}}{\lambda}R_n\Bigl(-\fr{1}{2\lambda}\Bigr)$$
with 
$$R_n(z): = \sum_{m=0}^n a_{m,n} z^m,\: a_{m,n} = \frac{(n + m)!}{m! (n-m)!} > 0.$$
We will prove the following
\begin{prop}

For $\lambda < 0$ we have
\begin{equation}
G_{n, n+1}(\lambda) =\fr {\fr{d}{d\lambda} \hnn(-\ii \lambda)}{\hnn(-\ii \lambda)} - \fr {\fr{d}{d\lambda} \hn(-\ii \lambda)}{\hn(-\ii \lambda)}> 0.
\end{equation}

\end{prop}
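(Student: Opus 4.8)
The plan is to reduce the claimed inequality to a positivity statement about the polynomials $R_n$. Writing $z = -\fr{1}{2\lambda}$, so that $z > 0$ when $\lambda < 0$, and using the representation $\hn(-\ii\lambda) = (-\ii)^n \fr{e^\lambda}{\lambda} R_n(z)$, the constant factor $(-\ii)^n$ cancels in the logarithmic derivative and a direct computation gives
\[
\fr{\dd \hn(-\ii\lambda)}{\hn(-\ii\lambda)} = 1 - \fr{1}{\lambda} + \fr{1}{2\lambda^2}\,\fr{R_n'(z)}{R_n(z)} .
\]
The terms $1-\fr{1}{\lambda}$ do not depend on $n$ and cancel in $G_{n,n+1}$, so
\[
G_{n,n+1}(\lambda) = \fr{1}{2\lambda^2}\Bigl(\fr{R_{n+1}'(z)}{R_{n+1}(z)} - \fr{R_n'(z)}{R_n(z)}\Bigr) = \fr{1}{2\lambda^2}\,\fr{W_n(z)}{R_n(z)R_{n+1}(z)},\qquad W_n := R_{n+1}'R_n - R_n'R_{n+1}.
\]
Because every coefficient $a_{m,n}$ is positive, $R_n(z), R_{n+1}(z) > 0$ for $z \geq 0$, and since $\fr{1}{2\lambda^2} > 0$ the proposition is equivalent to the inequality $W_n(z) > 0$ for all $z > 0$.

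Next I would show that each $R_n$ satisfies a second order differential equation. Substituting the series $R_n(z) = \sum_m a_{m,n} z^m$ into $z^2 R_n'' + (2z+1)R_n' - n(n+1)R_n$ and collecting the coefficient of $z^m$ produces $(m-n)(m+n+1)a_{m,n} + (m+1)a_{m+1,n}$; this vanishes because the explicit formula for $a_{m,n}$ yields $(m+1)a_{m+1,n} = (n+m+1)(n-m)a_{m,n}$. Hence
\[
z^2 R_n'' + (2z+1)R_n' - n(n+1)R_n = 0 .
\]
Multiplying by the integrating factor $\rho(z) = e^{-1/z}$ and setting $p(z) = z^2 e^{-1/z}$ turns this into the self-adjoint form
\[
\bigl(p(z) R_n'\bigr)' = n(n+1)\,\rho(z)\,R_n ,
\]
since $(z^2\rho)' = (2z+1)\rho$.

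Finally I would invoke the Sturm--Liouville Wronskian identity. Writing the self-adjoint equation for $R_n$ and for $R_{n+1}$, multiplying by $R_{n+1}$ and $R_n$ respectively and subtracting, the left-hand side telescopes and one obtains
\[
\fr{d}{dz}\bigl(p(z) W_n(z)\bigr) = \bigl[(n+1)(n+2) - n(n+1)\bigr]\rho(z) R_n(z) R_{n+1}(z) = 2(n+1)\,e^{-1/z}R_n(z)R_{n+1}(z),
\]
which is strictly positive for $z > 0$. Thus $p W_n$ is strictly increasing on $(0,\infty)$. As $z \to 0^+$ one has $p(z) = z^2 e^{-1/z} \to 0$ while $W_n(0)$ is finite, so $p(z)W_n(z) \to 0$; integrating the positive derivative from the endpoint gives $p(z)W_n(z) > 0$, and since $p(z) > 0$ for $z > 0$ we conclude $W_n(z) > 0$, which proves the proposition.

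The main obstacle is organising the argument so that positivity actually follows. The naive three-term recurrence among the $W_n$ yields only $W_n + W_{n-1} = 2(2n+1)z\,R_n^2$, whose alternating sign does not give the conclusion directly; the decisive step is to recognise the self-adjoint structure with the singular weight $\rho = e^{-1/z}$, for then the sign is controlled by the monotonicity of $p W_n$ together with the fact that the exponential weight annihilates any boundary contribution at $z = 0^+$. Verifying this endpoint behaviour is the only genuinely delicate point; the coefficient identity and the telescoping computation are routine.
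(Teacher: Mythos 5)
Your proof is correct and is essentially the paper's argument transported to the variable $w=-\fr{1}{2\lambda}$: your quantity $p(w)W_n(w)=w^2e^{-1/w}\bigl(R_{n+1}'R_n-R_n'R_{n+1}\bigr)$ is exactly one half of the paper's Wronskian $F(\lambda)=\eta_n\eta_{n+1}'-\eta_{n+1}\eta_n'$, your monotonicity identity $(pW_n)'=2(n+1)\rho R_nR_{n+1}>0$ corresponds to the paper's $F'(\lambda)=\fr{2(n+1)}{\lambda^2}\eta_n\eta_{n+1}>0$ (the paper's factor $2(n+2)$ is a typo), and your endpoint $w\to 0^+$ is the paper's $\lambda\to-\infty$. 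The only cosmetic difference is that you derive the Bessel-polynomial ODE for $R_n$ from the coefficient recurrence where the paper invokes the Riccati--Bessel equation for $\Xi_n(z)=z\hn(z)$; both yield the same Sturm--Liouville comparison.
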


\begin{proof} The purpose is to show that
$$\Bigl(\hn(\il) \dd \hnn(\il) - \hnn(\il) \dd \hn(\il)\Bigr)\Bigl(\hnn(\il) \hn(\il)\Bigr)^{-1} > 0.$$
Introduce the functions
$$\xi_n(\lambda): = \fr{e^{\lambda}}{\lambda} R_n\Bigl(\2l\Bigr),\: \eta_n(\lambda): =\lambda \xi_n(\lambda).$$
Then
$\hn(\il) = (-\ii)^n \xi_n(\lambda)$ and the above inequality is equivalent to
$$\Bigl(\xi_n(\lambda) \dd \xi_{n+1}(\lambda) - \xi_{n+1}(\lambda) \dd \xi_n(\lambda)\Bigr)\Bigl(\xi_{n+1}(\lambda) \xi_n(\lambda)\Bigr)^{-1}$$
$$=\Bigl(\eta_n(\lambda) \dd \eta_{n+1}(\lambda) - \eta_{n+1}(\lambda) \dd \eta_n(\lambda)\Bigr)\Bigl(\eta_{n+1}(\lambda) \eta_n(\lambda)\Bigr)^{-1} > 0.$$
Since $\eta_n(\lambda) \eta_{n+1}(\lambda) > 0$ for $\lambda < 0$, it suffices to show that the function
$$F(\lambda) = \eta_n(\lambda) \dd \eta_{n+1}(\lambda) - \eta_{n+1}(\lambda) \dd \eta_n(\lambda)$$
has  positive values for $\lambda \in (-\infty, 0).$
Consider the derivative
$$F'(\lambda) = \eta_n(\lambda) \fr{d^2}{d\lambda^2} \eta_{n+1}(\lambda) - \eta_{n+1}(\lambda) \fr{d^2}{d\lambda^2} \eta_n(\lambda).$$
We have
$$\eta_n(\lambda) =  \ii^{n+1} \hn(\il) (\il) = \ii^{n+1} \Xi_n(\il)= -\ii^{n-1} \Xi_n(\il).$$
The function $\Xi_n(z)= z\hn(z)$ satisfies the equation
$$\Xi_n''(z) + \Bigl(1 - \frac{n^2 + n}{z^2}\Bigr) \Xi_n(z) = 0$$
and
$$\fr{d^2}{d\lambda^2} \eta_n(\lambda) = \ii^{n-1} \Xi_n''(\il) = -\ii^{n-1}\Bigl( 1 + \fr{n^2 +n}{\lambda^2}\Bigr) \Xi_n(\il)$$
$$= \Bigl( 1 + \fr{n^2 +n}{\lambda^2}\Bigr)\eta_n(\lambda).$$
Consequently,
$$F'(\lambda) = \Bigl[\fr{(n+1)^2 + n+1}{\lambda^2} - \fr{n^2 + n}{\lambda^2}\Bigr] \eta_n(\lambda) \eta_{n+1}(\lambda)$$
$$= 2(n + 2)\fr{\eta_n(\lambda) \eta_{n+1}(\lambda)}{\lambda^2} > 0.$$

 On the other hand,
$$F(\lambda) = e^{\lambda} \rn \dd \Bigl(e^{\lambda} R_{n+1}\Bigl(\2l\Bigr)\Bigr) - e^{\lambda} R_{n+1}\Bigl(\2l\Bigr) \dd \Bigl(e^{\lambda} \rn\Bigr)$$
$$= \fr{e^{2\lambda}}{2 \lambda^2}\Bigl[ \rn R_{n+1}'\Bigl(\2l\Bigr) - R_{n+1}\Bigl(\2l\Bigr) R_n'(\2l)\Bigr]$$
and 
$$\lim_{\lambda \to - \infty} F(\lambda) = 0,\: \lim_{\lambda \nearrow 0} F(\lambda)= +\infty$$
since
$$\lim_{w \to +\infty} \Bigl[R_n(w) R_{n+1}'(w) - R_{n+1}(w) R_n'(w)\Bigr] = +\infty.$$
Finally, the function $F(\lambda)$ in the interval $(-\infty, 0]$ is increasing from 0 to $+\infty$ and this completes the proof.
\end{proof}

Now if $\lambda_n < 0$ is a solution the equation
\begin{equation} \label{eq:3.4}
g_n(\lambda) : = \frac{1}{\lambda} + \Bigl( \frac{d}{d \lambda} \hn(-\ii \lambda)\Bigr) (\hn(-\ii \lambda))^{-1} - \gamma = 0,
\end{equation}
one has 
$$g_{n+1}(\lambda_n) = \frac{1}{\lambda_n} + \Bigl(\dd \hnn(-i\lambda_n)\Bigr) (\hnn(-i\lambda_n))^{-1} - \gamma = G_{n, n+1}(\lambda_n)  > 0,$$
so $\lambda_n$ is not a root of the equation 
$$g_{n+1}(\lambda) = \frac{1}{\lambda} + \Bigl(\frac{d}{d\lambda} \hnn(-\ii \lambda) \Bigr) (\hnn(-\ii \lambda))^{-1} - \gamma = 0.$$

In the following we assume that $\gamma > 1.$ Then for $\lambda \to -\infty$ we have $g_{n+1}(\lambda) \to 1- \gamma < 0$, and since $g_{n+1}(\lambda_n) > 0$ the equation $g_{n+1}(\lambda) = 0$ has at least one root $-\infty < \lambda_{n+1} <\lambda_n.$

\begin{lem} Let $\gamma > 1$. For every $n \geq 1$ the equation $g_n(\lambda) = 0$ in the interval $(-\infty, 0)$ has exactly one root $\lambda_{n} < 0.$
\end{lem}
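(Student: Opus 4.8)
The plan is to reduce $g_n(\lambda)=0$ to the study of a single strictly monotone function of one real variable, and then combine the intermediate value theorem with that monotonicity.

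First I would put $g_n$ in a transparent form. For $\lambda\in(-\infty,0)$ set $w=-\fr{1}{2\lambda}\in(0,\infty)$ and recall $\hn(\il)=(-\ii)^n\fr{e^\lambda}{\lambda}\rn$, so that the constant $(-\ii)^n$ drops out of the logarithmic derivative. Differentiating $\log\big(e^{\lambda}\lambda^{-1}R_n(w)\big)$, the term $-1/\lambda$ coming from $\log(1/\lambda)$ cancels the explicit $1/\lambda$ in $g_n$, the factor $e^{\lambda}$ contributes the constant $1$, and $R_n$ contributes the rest; this gives
$$g_n(\lambda)=(1-\gamma)+\fr{1}{2\lambda^2}\,\fr{\drn}{\rn}=(1-\gamma)+2w^2\,\fr{R_n'(w)}{R_n(w)}.$$
Equivalently, with $\eta_n(\lambda)=e^{\lambda}\rn=\lambda\xi_n(\lambda)$ as in the proof of the preceding Proposition, $g_n=\eta_n'/\eta_n-\gamma$. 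Since every coefficient $a_{m,n}$ is positive, $R_n(w)>0$ and $R_n'(w)>0$ on $(0,\infty)$, so the last summand is strictly positive there.

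Next I would record the two boundary limits. As $\lambda\to-\infty$ one has $w\to0^{+}$, $R_n'(w)/R_n(w)\to R_n'(0)/R_n(0)=n(n+1)$, and the prefactor $2w^2\to0$, whence $g_n(\lambda)\to1-\gamma<0$. As $\lambda\nearrow0$ one has $w\to+\infty$ and $R_n'(w)/R_n(w)\sim n/w$, so $2w^2\cdot(n/w)=2nw\to+\infty$ and $g_n(\lambda)\to+\infty$. Continuity of $g_n$ on $(-\infty,0)$ and the intermediate value theorem then produce at least one root.

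The heart of the matter, and the step I expect to be the main obstacle, is uniqueness: I would show that $g_n$ is strictly increasing on $(-\infty,0)$, equivalently that $\phi_n(w):=2w^2R_n'(w)/R_n(w)$ is strictly increasing on $(0,\infty)$. I see two routes. The elementary one writes $\phi_n(w)=2w\cdot\big(wR_n'(w)/R_n(w)\big)$ and notes that $wR_n'(w)/R_n(w)=S_1/S_0$, where $S_j=\sum_{m=0}^{n}m^{j}a_{m,n}w^{m}$, is the mean of $m$ with respect to the positive weights $a_{m,n}w^{m}$; its derivative equals $(S_0S_2-S_1^2)/(wS_0^2)$, which is nonnegative by Cauchy--Schwarz and strictly positive for $n\geq1$ because the weights are not concentrated at a single index. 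Hence $\phi_n$ is a product of two positive, strictly increasing factors, so it is strictly increasing. The second route uses the differential identity $\dd^2\eta_n=\big(1+\fr{n^2+n}{\lambda^2}\big)\eta_n$ already established for $\eta_n$: from $g_n=\eta_n'/\eta_n-\gamma$ one gets $g_n'=\big(\eta_n''\eta_n-(\eta_n')^2\big)\eta_n^{-2}$, and the numerator $Q:=\big(1+\fr{n^2+n}{\lambda^2}\big)\eta_n^2-(\eta_n')^2$ satisfies $Q'=-\fr{2(n^2+n)}{\lambda^3}\eta_n^2>0$ on $(-\infty,0)$ with $Q(\lambda)\to0$ as $\lambda\to-\infty$; thus $Q>0$ and $g_n'>0$.

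Finally, a continuous strictly increasing function on $(-\infty,0)$ with limits $1-\gamma<0$ and $+\infty$ at the endpoints has exactly one zero $\lambda_n<0$, which is the assertion of the Lemma. The only slightly delicate points are the endpoint asymptotics, handled by $R_n(0)=1$, $R_n'(0)=n(n+1)$ and the leading behaviour $R_n(w)\sim a_{n,n}w^{n}$, and the strict positivity of the variance-type quantity $S_0S_2-S_1^2$, which is exactly where the hypothesis $n\geq1$ is used.
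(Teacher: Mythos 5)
Your argument is correct, and it reaches the conclusion by a mechanism genuinely different from the paper's. Both proofs begin with the same reduction $g_n(\lambda)=1-\gamma+2w^2R_n'(w)/R_n(w)$, $w=-\frac{1}{2\lambda}>0$, but the paper then clears denominators and studies the polynomial $\mathcal{R}_n(w)=w^2R_n'(w)+\frac{1-\gamma}{2}R_n(w)$: it computes its coefficients $b_{k,n}$ explicitly, observes that their signs are governed by a quadratic $B(k)$ increasing in $k$, so the coefficient sequence has exactly one sign change, and concludes by Descartes' rule of signs that there is exactly one positive root. You instead prove that $g_n$ is strictly increasing on $(-\infty,0)$ and combine this with the endpoint limits $1-\gamma<0$ and $+\infty$ via the intermediate value theorem. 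Your monotonicity argument is sound on both routes: the Cauchy--Schwarz/variance inequality $S_0S_2-S_1^2>0$ holds because the weights $a_{m,n}w^m$ are positive and supported on at least the two indices $m=0,1$ once $n\geq 1$ (and note that $\lambda\mapsto w$ is itself increasing, so monotonicity transfers correctly); the second route, with $Q=\bigl(1+\frac{n^2+n}{\lambda^2}\bigr)\eta_n^2-(\eta_n')^2$, $Q'=-\frac{2(n^2+n)}{\lambda^3}\eta_n^2>0$ for $\lambda<0$ and $Q\to 0$ at $-\infty$, neatly recycles the ODE identity already established in the proof of Proposition 3.1. The trade-off: Descartes' rule is short and purely algebraic but leans on the explicit formula for $a_{m,n}$, whereas your approach needs only the positivity of these coefficients (plus $R_n(0)=1$, $R_n'(0)=n(n+1)$ and $R_n(w)\sim a_{n,n}w^n$ for the endpoint behaviour, all of which you state correctly) and delivers the stronger structural fact that $g_n$ is strictly monotone, from which simplicity of $\lambda_n$ is immediate.
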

\begin{proof} Setting $ w = -\fr{1}{2\lambda}$, we write the equation (\ref{eq:3.2}) as  ${\mathcal R}_n(w): = w^2 R_n'(w) + \alpha R_n(w) = 0$, where $\alpha = \fr{1- \gamma}{2} < 0.$
We will show that this equation has exactly one positive root.
Since 
$$w^2R_n'(w) = \sum_{k=1}^{n} k a_{k,n} w^{k+1},\: R_n(w) = \sum_{k=0}^n a_{k,n} w^{k},$$
the polynomial ${\mathcal R}_n(w)$ has the representation
$${\mathcal R}_n(w) = \sum_{k=0}^{n+1} b_{k, n} w^k$$
 with
$$\begin{cases}  b_{k, n} = (k-1)a_{k-1,n} + \alpha a_{k, n}, \: 0\leq k \leq n, \: a_{-1, n} = 0,\\
b_{n + 1, n} = \fr{(2n)!}{(n - 1)!}.\end{cases}$$
Taking into account the form of $a_{k, n}$, we deduce
\begin{equation} \label{eq:3.4}
b_{k, n} = \fr{(n+ k - 1)!}{(n-k + 1)! k!}\Bigl (k(k-1) + \alpha(n+k)(n-k+1)\Bigr),\: 0 \leq k \leq n+1.
\end{equation}
Thus the sign of $b_{k, n}$ depends on the sign of the function 
$$B(k): = (1-\alpha)k^2 + (\alpha-1) k + \alpha(n^2 + n)$$
which for $k \geq 1$ is increasing since 
$$B'(k) = 2(1-\alpha)k + \alpha-1 \geq 1 -\alpha > 0.$$
Clearly, $b_{0, n} = \alpha < 0$ and $b_{n+1, n} > 0.$ There are two cases:\\

(i) $b_{1, n} \leq 0.$ Then there is only one change of sing in the Descartes' sequence $\{b_{n+1, n}, b_{n,n},...,b_{1, n}, b_{0, n}\}.$\\

(ii) $b_{1, n} > 0$. Then $b_{k, n} > 0$ for $1 \leq k \leq n+1$ and in the Descartes' sequence $\{b_{n+1,n}, b_{n,n},...,b_{1,n}, b_{0, n}\}$ one has again only one change of sign.

Applying the Descartes' rule of signs, we conclude that the number of the positive roots of ${\mathcal R}_n(w) = 0$ is exactly one.

\end{proof}
Combining Proposition 3.1 and Lemma 3.1, one obtain the following

\begin{cor} Let $\gamma > 1$. Then the generator $G_b$ has an infinite sequence of real eigenvalues
$$-\infty < ... < \lambda_n <...<\lambda_2 < \lambda_1 < 0$$
and $\lambda_n$ has geometric multiplicity $2n + 1.$
\end{cor}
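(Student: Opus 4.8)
The plan is to assemble the Corollary from the two preceding results: Lemma 3.1 produces, for every $n \geq 1$, a unique negative root $\lambda_n$ of $g_n$, and Proposition 3.1 governs how these roots are ordered in $n$. First I would verify that each $\lambda_n$ is genuinely an eigenvalue of $G_b$. The reduction in Section 2 shows that $\mu = -\ii\lambda$ with $\lambda < 0$ is an eigenvalue exactly when one of the brackets in (\ref{eq:2.5}) or (\ref{eq:2.6}) vanishes, i.e. when $\lambda$ solves $g_n = 0$ or the companion equation obtained by replacing $\gamma$ with $1/\gamma$. Given a root $\lambda_n$ of $g_n$, choosing $\alpha_n^m \neq 0$ for one index $|m| \leq n$ and setting all remaining coefficients to zero yields, through (\ref{eq:2.3})--(\ref{eq:2.4}), a nonzero solution of (\ref{eq:2.2}) meeting the boundary condition; hence $\lambda_n \in \sigma_p(G_b)$.

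Next I would establish strict monotonicity. Since $g_{n+1}(\lambda) - g_n(\lambda) = G_{n,n+1}(\lambda)$, Proposition 3.1 gives $g_{n+1}(\lambda_n) = G_{n,n+1}(\lambda_n) > 0$. On the other hand, for $\gamma > 1$ the logarithmic Hankel derivative in $g_{n+1}$ tends to $1$ as $\lambda \to -\infty$, so $g_{n+1}(\lambda) \to 1 - \gamma < 0$. By the intermediate value theorem $g_{n+1}$ vanishes somewhere in $(-\infty, \lambda_n)$, and the uniqueness part of Lemma 3.1 forces this zero to be $\lambda_{n+1}$; thus $\lambda_{n+1} < \lambda_n$. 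The sequence $(\lambda_n)$ is therefore strictly decreasing, so the eigenvalues are pairwise distinct and infinite in number. Their divergence $\lambda_n \to -\infty$ then follows from the isolatedness of the eigenvalues in $\{\re z < 0\}$ established in \cite{CPR1}: a bounded strictly decreasing sequence would accumulate at a finite point $\lambda_\ast \leq \lambda_1 < 0$, which is excluded.

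Finally I would compute the geometric multiplicity. A general element of $\ker(G_b - \lambda_n)$ is given by (\ref{eq:2.3})--(\ref{eq:2.4}) at $\mu = -\ii\lambda_n$, where (\ref{eq:2.5}) and (\ref{eq:2.6}) decouple the coefficients block by block. The bracket in (\ref{eq:2.5}) vanishes precisely when $\lambda_n$ is a root of $g_\ell$, which by uniqueness and the ordering $\lambda_{\ell+1} < \lambda_\ell$ occurs only for $\ell = n$, leaving $\alpha_n^m$ free for the $2n+1$ indices $|m| \leq n$. The bracket in (\ref{eq:2.6}) vanishes precisely when $\lambda_n$ solves the companion equation obtained from $g_\ell$ by replacing $\gamma$ with $1/\gamma$. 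For $\gamma > 1$ the constant $\alpha$ of the Descartes analysis in Lemma 3.1 is replaced by $\frac{1 - 1/\gamma}{2} > 0$; rerunning that analysis, every value $B(k) = (1-\alpha)k^2 + (\alpha-1)k + \alpha(\ell^2+\ell)$ at the integers $0 \leq k \leq \ell+1$ is positive, since $B(0) = B(1) = \alpha(\ell^2+\ell) > 0$ and $B$ is increasing for $k \geq 1$. Hence all coefficients $b_{k,\ell}$ are positive, the companion polynomial ${\mathcal R}_\ell$ has no positive root, every $\beta_\ell^m$ vanishes, and $\dim \ker(G_b - \lambda_n) = 2n+1$.

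The main obstacle is this last step: showing that the companion equation contributes nothing to the eigenspace. The delicate point is that although the real minimizer $k = \tfrac12$ of $B$ may give $B(\tfrac12) < 0$ when $\gamma$ is close to $1$, only the integer degrees $k = 0, 1, 2, \dots$ enter the sign count, and these remain positive; this is exactly what suppresses all $\beta$-modes and pins the geometric multiplicity at $2n+1$.
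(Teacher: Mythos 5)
Your proposal is correct and follows the paper's route: the corollary is obtained by combining Proposition 3.1 (which gives $g_{n+1}(\lambda_n)=G_{n,n+1}(\lambda_n)>0$, hence a root of $g_{n+1}$ strictly below $\lambda_n$) with Lemma 3.1 (uniqueness of the negative root of each $g_n$), and the multiplicity count comes from the linear independence of the $2n+1$ spherical harmonics of degree $n$. The one place where you go beyond the paper is genuinely valuable: the paper asserts geometric multiplicity $2n+1$ only by pointing to the independence of the $Y_n^m$, leaving implicit that no $\beta$-modes from (\ref{eq:2.6}) can contribute at $\lambda_n$. Your rerun of the Descartes analysis with $\alpha=\tfrac{1-1/\gamma}{2}>0$ settles this cleanly: since $(1-\alpha)k^2+(\alpha-1)k=(1-\alpha)k(k-1)\ge 0$ at every integer $k\ge 0$ and $\alpha(n^2+n)>0$, all coefficients $b_{k,n}$ of the companion polynomial are positive, so it has no positive root $w=-\tfrac{1}{2\lambda}$ and the bracket in (\ref{eq:2.6}) never vanishes for $\lambda<0$ when $\gamma>1$. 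Your observation that only integer values of $k$ enter, so the possible negativity of $B$ at $k=\tfrac12$ is harmless, is exactly the right point. The remaining steps (each root is an eigenvalue via the expansion (\ref{eq:2.3})--(\ref{eq:2.4}), strict monotonicity, divergence to $-\infty$ via the isolatedness of eigenvalues from \cite{CPR1}, and the exclusion of $\lambda_n$ as a root of $g_\ell$ for $\ell\neq n$) are all sound and consistent with what the paper uses, if more explicit than the paper's one-line proof.
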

The last statement concerns the geometric multiplicity since the functions $\{Y_{m, n}(\omega)\}_{m=-n}^m$ are linearly independent. The algebraic multiplicity of $\lambda_m$ will be discussed in Section 5.

\section{Estimation of the roots}
\def\il{\ii \lambda}
Throughout this section we assume $\gamma > 1$. Set $\lambda = \frac{\ii \sqrt{z}}{h},\: 0 < h \ll 1$ with
$z = - 1 + \ii\eta, \: 0 \leq |\eta| \leq h^{1/2}, \eta \in \R.$ Consider the Dirichlet problem
\begin{equation} \begin{cases} \label{eq:4.1}
(h^2\Delta  + z) w = 0, \: |x| >1, w \in H^2(|x| > 1),\\
w = f,\: |x| = 1
\end{cases} 
\end{equation}
and note that $\Delta + \frac{z}{h^2} = \Delta - \lambda^2.$
The solution of (\ref{eq:4.1}) has the form 
$$w (r \omega)= \sum_{n=0}^{\infty} \sum_{m = -n}^n \hn(-\il r) (\hn(-\il)^{-1}\alpha_{n, m} Y_{n, m}(\omega),$$
where
$$ f(\omega) =  \sum_{n=0}^{\infty}\sum_{m = -n}^n \alpha_{n, m} Y_{n, m}(\omega).$$
The semiclassical Dirichlet-to-Neumann operator ${\mathcal N}_{ext}(h, z) =\fr{h}{\ii} \fr{d}{dr} w\vert_{r=1}$ related to
(\ref{eq:4.1}) becomes

$${\mathcal N}_{ext}(h, z) = -\ii \sqrt{z}\sum_{n=0} ^{\infty}\sum_{m = -n}^n  (\hn)' (-\il ) (\hn(-\il))^{-1} \alpha_{n, m} Y_{n, m}$$
$$ = \sqrt{z} \sum_{n=0} ^{\infty}\sum_{m = -n}^n \dd \Bigl(\hn(-\il)\Bigr) (\hn(-\il))^{-1} \alpha_{n, m} Y_{n, m}.$$
By using the approximation of ${\mathcal N}_{ext}(h, z)$ established in \cite{V},\cite{P} for $ z = - 1 + \ii \eta$, one deduces
$$\|{\mathcal N}_{ext}(h, z)f - Op_h(\rho)f\|_{L^2(\S^2)} \leq C \frac{|\sqrt{z}|}{|\lambda|}\|f\|_{L^2(\S^2)}, \: 0 < h \leq h_0$$
with $\rho = \sqrt{z- r_0(x', \xi')}$ and a constant $C > 0$ independent of $z, \lambda$ and $f$. Here $r_0(x', \xi')$ is the principal symbol of the semiclasssical Laplace-Beltrami operator $-h^2\Delta_{\S^2}= \frac{z}{\lambda^2} \Delta_{\S^2}$. Moreover, $\sqrt{z} = \ii\sqrt{1 - \ii \eta} = \ii(1 -\frac{\ii \eta}{2} + \oc(\eta^2))$ and
$$ \re \lambda = -\frac{1}{h} + \oc(1),\: \im \lambda = \oc(h^{-1/2}).$$
Hence, for $0 < h \leq h_0$ we get
$$ \lambda \in \Lambda_0 =\{ z\in \C: \: |\im z| \leq ch_0^{1/2}|\re z|, \: \re \lambda < -\epsilon < 0,\:\:|\lambda| \geq \lambda_0\}.$$

On the other hand,
$$\Bigl\|Op_h(\rho)  - \sqrt{z}\Bigl( \sqrt{ 1 - \fr{\Delta_{\S^2}}{\lambda^2}}\Bigr)\Bigr\|_{L^2(\S^2)\to L^2(\S^2)} \leq C_1 |\lambda|^{-1}, \: \lambda \in \Lambda_0.$$
 Applying the spectral theorem, one deduces
$$\Bigl(\sqrt{ 1 -\fr{\Delta_{S^2}}{\lambda^2}}\Bigr)f = \sum_{n=0}^{\infty}\sum_{m = -n}^n \Bigl(\sqrt{1 +\fr{n(n+1)}{\lambda^2}}\Bigr) \alpha_{n,m} Y_{n. m}$$
and
$$\Bigl\|\Bigl({\mathcal N}_{ext}(h, -z) - \sqrt{z}\Bigl(\sqrt{ 1 - \fr{\Delta_{S^2}}{\lambda^2}} \Bigr)f\Bigr\|^2_{L^2(S^2)}  = |z|\sum_{n=0}^{\infty} \sum_{m=-n}^n \Bigl| \dd \Bigl(h_n(-i \lambda )\Bigr) (h_n(-i\lambda))^{-1}$$ $$- \sqrt{1 +\fr{n(n+1)}{\lambda^2}}\Bigr|^2 |a_{n, m}|^2.$$
This implies 
\begin{equation} \label{eq:4.2}
\Bigl| \dd \Bigl(\hn(-\il )\Bigr) (\hn(-\il))^{-1}- \sqrt{1 +\fr{n(n+1)}{\lambda^2}}\Bigr| \leq C_2 |\lambda|^{-1}, \: \forall n \in \N, \: \lambda \in \Lambda_0
\end{equation}
which we write as
\begin{equation} \label{eq:4.3}
\Bigl|\Bigl[ \frac{1}{\lambda} + \dd \Bigl(\hn(-\il)\Bigr) (\hn(-\il))^{-1}- \gamma\Bigr] -\Bigl[ \sqrt{1 +\fr{n(n+1)}{\lambda^2}}- \gamma\Bigr]\Bigr| \leq C_0|\lambda|^{-1}.
\end{equation}
\begin{rem} For bounded $1 \leq n \leq N_0$ and sufficiently large $|\lambda|$ the estimate $(4.2)$ follows easily from the fact that $\frac{R_n'(w)}{R_n(w)} = n(n+1) + \oc(|w|)$ as $|w| \to 0.$
\end{rem}
\begin{rem}  The estimate $(4.2)$ is similar to that in Proposition 2.1 in \cite{PV}, where the function $\frac{J_{\nu}'(\lambda)}{J_{\nu}(\lambda)}$ for $\nu \geq 0$ and $0 < C \leq |\im \lambda| \leq \delta |\re \lambda|, \: \re \lambda > C_1$ has been approximated. Here $J_{\nu}(z)$ is the Bessel function, while the boundary problem examined in \cite{PV} is in the domain $|x| < 1.$ 
\end{rem}
Put $ z = \lambda$ and for $z \in \Lambda_0$ consider the function
$$f_n(z) : = \sqrt{1 +\fr{n(n+1)}{z^2}}-  \gamma $$
with zeros
$$z_{n}^{\pm} = \pm  \sqrt{\frac{n^2 + n}{\gamma^2 - 1}}.$$
In the following we set $z_n = -\sqrt{\fr{n(n+1)}{\gamma^2 -1}}.$ Clearly,
$$f_n'(z) = - \fr{1}{z} \fr{ \fr{n(n+1)}{z^2}}{\sqrt{1 +\fr{n(n+1)}{z^2}}}$$
and
$\fr{n(n+1)}{z_n^2} = \gamma^2 - 1,\: f_n'(z_n) = -\fr{\gamma^2 - 1}{ \gamma z_n}.$
A calculus yields the second derivative
$$f_n''(z) = \fr{1}{z^2}\Bigl[ \fr{3 n(n+1)}{z^2}\Bigl(\sqrt{1 + \fr{n(n+1)}{z^2}}\Bigr) $$
$$- \fr{n^2(n+1)^2}{z^4} \Bigl(\sqrt{1 + \fr{n(n+1)}{z^2}}\Bigr)^{-1/2}\Bigr] \Bigr( 1 +\fr{n(n+1)}{z^2}\Bigr)^{-1}.$$
For $n$ large enough and $a > 0$ to be fixed below introduce the contour 
$$C_n(a):= \{z = z_n  + a e^{\ii\varphi},\: 0 \leq \varphi < 2\pi\} \subset \Lambda_0.$$

 Our purpose is to choose $a$ so  that
\begin{equation} \label{eq:4.4}
|f_n(z)| \geq \fr{C_0}{|z|},\:\: \forall z \in C_n(a).
\end{equation}

We have 
$$z^2 = z_n^2 + 2 z_n a e^{\ii \varphi} + a^2 e^{2\ii\varphi}$$
and
\begin{equation} \label{eq:4.5}
\fr{n(n+1)}{z^2} = (\gamma^2 - 1)\Bigl( 1 + \oc \Bigl(\fr{1}{n}\Bigr) a+ \oc\Bigl(\fr{1}{n^2}\Bigr) a^2 \Bigr)^{-1}, \: z \in C_n(a).
\end{equation}
On the other hand,
$$\sqrt{ \fr{n(n+1)}{z^2} +1} =  \Bigl[\fr{\gamma^2 +\oc \Bigl(\fr{1}{n}\Bigr) a + \oc(\fr{1}{n^2})a^2}{1 + \oc \Bigl(\fr{1}{n}\Bigr) a + \oc(\fr{1}{n^2}) a^2}\Bigr]^{1/2}.$$
Clearly, one has the estimate
\begin{equation} \label{eq:4.6}
|f_n(z)| \geq   \fr{\gamma^2 - 1}{\gamma |z_n|}a  - \fr{a^2}{2} \sup_{z\in C_n(a)} |f_n''(z)|,\: z \in C_n(a).
\end{equation}
Set $C_{\gamma} = \fr{\gamma^2 - 1}{\gamma} > 0$ and  choose $a > 0$ so that $C_{\gamma} a > 4 C_0$. We fix $a$ and obtain
$$\fr{C_{\gamma}a}{2 |z_n|} > \fr{2C_0}{|z_n|} >   \fr{C_0}{|z_n| | 1 +\fr{a e^{\ii \varphi}}{z_n}|},\: 0 \leq \varphi < 2 \pi,$$
taking $n$ large enough to satisfy the inequality
$$ \fr{1}{\Bigl| 1 +\fr{a e^{\ii \varphi}}{z_n} \Bigr|} < 2.$$
Next we  arrange the inequality
\begin{equation} \label{eq:4.7}
\fr{C_{\gamma} a}{2 |z_n|} - \fr{a^2}{2} \sup_{z\in C_n(a)} |f_n''(z)| > 0.
\end{equation}
It is clear that 
$$f_n''(z) = \fr{1}{z^2} G\Bigl(\fr{n(n+1)}{z^2}\Bigr),$$
where 
$$G(\zeta) = \Bigl[3 \zeta \sqrt{\zeta + 1} - \zeta^2 (\zeta + 1)^{-1/2}\Bigr](\zeta + 1)^{-1}.$$
Note that for $z \in C_n(a)$ and $n$ large enough according to (\ref{eq:4.4}), the function $|G(\fr{n(n+1)}{z^2})|$ is bounded by a constant $B_{\gamma, a}$ depending on $\gamma$ and $a$
. Thus for large $n$ we get
$$\sup_{z \in C_n(a)} |f_n''(z)| \leq B_{\gamma, a} \sup_{z \in C_n(a)}\fr{1}{|z|^2} = B_{\gamma, a} \fr{1}{|z_n|^2} \sup_{z \in C_n(a)} \fr{1}{|1 + \fr{a e^{i\varphi}}{z_n}|^2}\leq 4 B_{\gamma, a}\fr{1}{|z_n|^2}$$
and the proof of (\ref{eq:4.7}) is reduced to
$$C_{\gamma} > 4 B_{\gamma, a}\fr{a}{|z_n|}$$
which is satisfied taking again $n$ large. Finally, we proved the estimate (\ref{eq:4.3}) and we can apply Rouch\'e theorem for the functions $g_n(z)$ and $f_n(z)$ and conclude that the function
$g_n(z)$
has exactly one simple zero $\lambda_n$ in $C_n(a)$. Since $g_n(z)$ has only real zeros (see Appendix in \cite{CPR2}), this implies the following

\begin{lem} There exist $n_0(\gamma)$ and $a(\gamma) > 0$ depending on $\gamma$ such that for $n \geq n_0(\gamma)$ the negative root $\lambda_n$ of the equation (\ref{eq:3.2}) satisfies the estimate
\begin{equation} \label{eq:4.8}
\Bigl| \lambda_n + \sqrt{\fr{n(n+1)}{\gamma^2 - 1}}\Bigr | \leq a(\gamma).
\end{equation}
\end{lem}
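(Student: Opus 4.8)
The proof will be a direct application of Rouch\'e's theorem, since the preceding analysis has already assembled the two ingredients it needs: the approximation (\ref{eq:4.3}), which bounds $|g_n - f_n|$ by $C_0/|\lambda|$ throughout $\Lambda_0$, and the lower bound (\ref{eq:4.4}), which keeps $|f_n(z)|$ above $C_0/|z|$ all along the circle $C_n(a)$ once $a = a(\gamma)$ and $n$ are large. The plan is to compare $g_n$ with $f_n$ on $C_n(a)$, count the zeros of $f_n$ enclosed, and then use the reality of the zeros of $g_n$ to pin down the enclosed zero as the root $\lambda_n$.

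First I would settle the zero count for $f_n$. Its only zeros are $z_n^{\pm} = \pm\sqrt{(n^2+n)/(\gamma^2-1)}$; the center of $C_n(a)$ is $z_n = z_n^-$, a simple zero, while the companion zero $z_n^+$ sits at distance $2|z_n|$ from the center. Because $a$ is fixed while $|z_n| \to \infty$, the disk bounded by $C_n(a)$ traps only the single simple zero $z_n$ for all $n \geq n_0(\gamma)$.

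Next I would verify that the comparison inequality is strict on the contour. Combining (\ref{eq:4.3}) and (\ref{eq:4.4}) gives $|g_n(z) - f_n(z)| \leq C_0/|z| < |f_n(z)|$ for every $z \in C_n(a) \subset \Lambda_0$; the strictness is exactly what the choice $C_\gamma a > 4 C_0$ and the subsequent arrangement of (\ref{eq:4.7}) were designed to guarantee. By Rouch\'e's theorem $g_n$ and $f_n$ then have the same number of zeros inside $C_n(a)$, so $g_n$ has exactly one there, and it is simple.

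Finally I would identify this zero. By Lemma 3.1 the equation $g_n(\lambda)=0$ has a unique negative real root, and by the Appendix of \cite{CPR2} the zeros of $g_n$ are all real; hence the single zero furnished by Rouch\'e inside $C_n(a)$ is necessarily that real root $\lambda_n$. This places $\lambda_n$ within distance $a(\gamma)$ of $z_n = -\sqrt{n(n+1)/(\gamma^2-1)}$, which is the assertion (\ref{eq:4.8}). I expect the only genuine obstacle to be the uniformity bookkeeping: one must check that the threshold $n_0(\gamma)$ and the radius $a(\gamma)$ can be chosen \emph{simultaneously} so that (\ref{eq:4.4}), the containment $C_n(a) \subset \Lambda_0$, and the exclusion of $z_n^+$ all hold for every $n \geq n_0(\gamma)$ at once.
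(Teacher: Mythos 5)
Your proposal follows exactly the paper's own route: combine the approximation (\ref{eq:4.3}) with the lower bound (\ref{eq:4.4}) on the contour $C_n(a)$, apply Rouch\'e's theorem to transfer the single simple zero of $f_n$ at $z_n$ to $g_n$, and invoke the reality of the zeros of $g_n$ (Appendix of \cite{CPR2}) to identify that zero with the negative root $\lambda_n$ from Lemma 3.1. The approach and all key ingredients match the paper's argument, so no further comparison is needed.
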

\begin{rem} According to Proposition 2.1, $n_0(\gamma)$ must satisfy the inequality
$$n_0(\gamma) \geq \frac{\sqrt{\gamma^2 - 1}}{\max\{\gamma - 1, \sqrt{\gamma - 1}\}}.$$
\end{rem}

\section{Weyl asymptotics}

We start with the analysis of the multiplicity of $\lambda_n.$
\begin{lem} For $n \geq n_0(\gamma)$ we have ${\rm mult}(\lambda_n) = 2n + 1.$
\end{lem}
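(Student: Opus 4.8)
The plan is to block-diagonalise the resolvent $(G_b - z)^{-1}$ along the decomposition of $\hc = (L^2(\{|x|>1\}))^6$ into the invariant subspaces spanned by the vector spherical harmonics $Y_n^m(\omega)\omega$, $U_n^m(\omega)$ and $V_n^m(\omega)$ used in Section 2. Writing the data in this basis reduces $G_b - z$ to a countable family of finite systems indexed by $(n,m)$, carrying the two polarisations encoded by the coefficients $\alpha_n^m$ and $\beta_n^m$. The boundary condition couples each system only through the two brackets in (\ref{eq:2.5}) and (\ref{eq:2.6}): the $\alpha$-channel is governed by $g_n(z) = 0$ with the parameter $\gamma$, while the $\beta$-channel is governed by the same equation with $\gamma$ replaced by $\frac{1}{\gamma}$. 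Repeating the Descartes argument of Lemma 3.1 for $\alpha' = \frac{1}{2}(1 - \frac{1}{\gamma}) > 0$ gives $b_{0,n} = \alpha' > 0$ and $B(k) > 0$ for all $k \geq 0$, so the $\beta$-channel equation has no positive root $w = -\frac{1}{2\lambda}$ and hence no negative eigenvalue when $\gamma > 1$. Thus only the $2n+1$ modes $\alpha_n^m$, $|m| \le n$, are singular at $\lambda_n$, matching the geometric count of Corollary 3.1.

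Next I would write the resolvent in a single $\alpha$-channel explicitly. Solving $(G_b - z)u = f$ by separation of variables, the radial part is represented by the outgoing Hankel function $\hn(-\ii z r)$ (so that $u \in \hc$), and the resulting scalar boundary value problem is solvable precisely when the bracket
$$D_n(z) = \hn(-\ii z) + \frac{d}{dr}\bigl(\hn(-\ii z r)\bigr)\big|_{r=1} - \gamma z\, \hn(-\ii z)$$
is nonzero. Factoring out the nonvanishing quantity $\hn(-\ii z)$ and using the identity relating (\ref{eq:3.1}) and (\ref{eq:3.2}), one has $D_n(z) = z\,\hn(-\ii z)\,g_n(z)$, so for $z$ near $\lambda_n$ the zeros of $D_n$ coincide with those of $g_n$ and have the same order. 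Consequently, in each $\alpha$-channel the resolvent is meromorphic with a single pole at $\lambda_n$ whose order equals $\mathrm{ord}_{\lambda_n} g_n$. By Corollary 3.1 the $\lambda_n$ are simple and strictly decreasing, hence $g_{n'}(\lambda_n) \neq 0$ for $n' \neq n$, so at $z = \lambda_n$ only the $2n+1$ channels of order $n$ are singular.

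Now I invoke Lemma 4.1: for $n \geq n_0(\gamma)$ the Rouch\'e comparison with $f_n$ shows that $\lambda_n$ is a \emph{simple} zero of $g_n$. Therefore the pole of the resolvent in each of the $2n+1$ channels is simple, and the Riesz projection
$$P_n = \frac{1}{2\pi\ii}\int_{|\lambda_n - z| = \epsilon}(z - G_b)^{-1}\,dz$$
equals the residue at $\lambda_n$. A simple pole forces the range of $P_n$ to coincide with $\Ker(G_b - \lambda_n)$, i.e. there are no generalised eigenvectors and no nontrivial Jordan blocks. Since the $2n+1$ channels are mutually orthogonal and each contributes a one-dimensional eigenspace, $\mathrm{rank}\, P_n = 2n+1$, which is exactly the assertion $\mathrm{mult}(\lambda_n) = 2n+1$.

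The main obstacle is to make this channel representation of $(G_b - z)^{-1}$ fully rigorous on the exterior domain: one must justify the orthogonal block decomposition of $\hc$, solve the inhomogeneous radial system with the correct outgoing condition so that $u$ stays in $\hc$, and verify that the order of the channel pole equals exactly $\mathrm{ord}_{\lambda_n} g_n$ and not less, that is, that the numerator of the channel resolvent does not also vanish at $\lambda_n$. Equivalently one may phrase this through Gohberg--Sigal theory, for which the algebraic multiplicity attached to a scalar characteristic function equals the order of its zero; the simplicity supplied by Lemma 4.1 then yields the claim. The remaining technical points are to confirm that the boundary condition does not secretly couple the $\alpha$- and $\beta$-channels and that the $\beta$-channel indeed contributes no pole, both of which follow from the computation in Section 2 together with the Descartes argument above.
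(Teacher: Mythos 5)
Your proposal is correct and follows essentially the same route as the paper: expansion in the vector spherical harmonics, meromorphic channel coefficients with denominator proportional to $g_n$ (resp.\ $g_n$ with $\gamma$ replaced by $\gamma^{-1}$ in the $\beta$-channel, which is regular for $\gamma>1$), simplicity of the zero $\lambda_n$ from the Rouch\'e argument of Section 4, and the conclusion that the Riesz projection has rank $2n+1$. The only technical point you leave open --- constructing the particular solution of the inhomogeneous system --- is handled in the paper by writing $(u,v)=(A-\lambda)^{-1}(f,g)+(u_1,v_1)$ with $A$ the skew self-adjoint Maxwell operator, so that the pole is carried entirely by the boundary-value problem for $(u_1,v_1)$.
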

\begin{proof} Since the geometric multiplicity of $\lambda_n$ is $2n +1$, it is sufficient to show that
\begin{equation} \label{eq:5.1}
{\rm mult}(\lambda_n) \leq 2n + 1.
\end{equation}
Let $\lambda \in \Lambda_0,$ where $\Lambda_0$ is the set introduced in the previous section and let $ \lambda \notin \sigma(G_b)$. If $0 \neq (f, g) \in {\rm Image}\: G_b \cap L^2(\Omega)$, one has $\dive f = \dive g = 0$ and for
$(u, v) = (G_b - \lambda)^{-1} (f, g)$ we get $\dive u = \dive v = 0.$
Consider the skew self-adjoint operator 
$$A = \begin{pmatrix} 0 & -\curl \\ \curl & 0 \end{pmatrix}$$
with boundary condition $\nu \wedge u = 0$ on $\S^2.$ Then $\sigma(A) \subset \ii \R$ and let $(u_0(x; \lambda), v_0(x; \lambda)) = (A- \lambda)^{-1}(f, g),$ that is 

\begin{equation} \label{eq:5.2}
\begin{cases} (A - \lambda) \begin{pmatrix} u_0\\ v_0 \end{pmatrix} = \begin{pmatrix} f\\ g  \end{pmatrix} \: {\rm for}\:|x| >1,\\
\nu \wedge u_0= 0 \: {\rm on}\: \S^2. \end{cases}
\end{equation} 
Since $\dive u_0 =\dive v_0 = 0$, the well known coercive estimates yield $(u_0, v_0) \in H^1(\Omega)$. Moreover the resolvent
$(A - \lambda)^{-1}$ is analytic in $\{z \in \C:\:\Re z < 0\}$ and $u_0(x; \lambda),\: v_0(x; \lambda)$ depend analytically on $\lambda$.
We write $(u, v) = (u_0, v_0) + (u_1, v_1)$, where $(u_1(x; \lambda), v_1(x; \lambda))$ is the solution of the problem
\begin{equation}\label{eq:5.3}
\begin{cases} (G - \lambda) \begin{pmatrix} u_1\\ v_1 \end{pmatrix} = \begin{pmatrix} 0\\ 0  \end{pmatrix} \: {\rm for}\:|x| >1,\\
 (u_1)_{tan} - \gamma (\nu \wedge (v_1)_{tan}) = - \gamma (\nu \wedge (v_0)_{tan}(x; z))\: {\rm on}\: \S^2. \end{cases}
\end{equation}
To solve (\ref{eq:5.3}), note that $-\gamma(\nu \wedge (v_0)_{tan}(\omega; z)) = F(\omega; \lambda) \in L^2(\S^2)$ with
$F(\omega; \lambda)$ analytical in $\lambda$ for $\lambda \in \Lambda_0$. Thus we may write
$$F(\omega; \lambda) = \sum_{n=1}^{\infty} \sum_{m = -n}^n \tilde{\alpha}_n^m (\lambda) U_n^m(\omega) + \tilde{\beta}_n^m(\lambda) V_n^m(\omega)$$
with analytical coefficients $\tilde{\alpha}_n^m(\lambda), \tilde{\beta}_n^m(\lambda)$. Now we can solve (\ref{eq:2.5}), (\ref{eq:2.6}) with right hand part $(\tilde{\alpha}_n^m(\lambda),\: \tilde{\beta}_n^m(\lambda))$. Finally, we obtain a representation of the solution of (\ref{eq:5.3}) with meromorphic coefficients
$$\alpha_n^m(\lambda) = \frac{\tilde{\alpha}_n^m(\lambda)}{\hn(-\il)\Bigl[1 +  \frac{d}{dr}(\hn(-\il r))\vert_{r = 1}(\hn(-\il))^{-1} - \lambda\gamma \Bigr]},$$
$$ \beta_n^m(\lambda) = -\frac{\lambda\tilde{\beta}_n^m(\lambda)} {\gamma \hn(-\il)\Bigl[1 +  \frac{d}{dr}(\hn(-\il r))\vert_{r = 1}(\hn(-\il))^{-1} - \lambda\gamma^{-1} \Bigr]}.$$
 If $\gamma > 1$ the analysis in the previous section shows that for $\lambda \in \Lambda_0$
the meromorphic function $\alpha_n^m(\lambda)$ has a simple pole at $\lambda_n < 0$, while $\beta_n^m(\lambda)$ is analytic in $\Lambda_0.$ For $0 <\gamma < 1$ the function $\alpha_n^m(\lambda)$ is analytic in $\Lambda_0$ and $\beta_n^m(\lambda)$ is meromorphic.  Next we integrate $(u(x; \lambda), v(x; \lambda))$ over  the circle
$|\lambda_n - \lambda| = \ep$, where $\ep$ is sufficiently small. The integral of $(u_0(x;\lambda), v_0(x;\lambda))$ vanish, while for the integral of $(u_1(x; \lambda), v_1(x;\lambda))$, taking into account the representation of the solution of (\ref{eq:5.3}), we will obtain a sum
$$S_n = \begin{cases}
c_n\sum_{m= -n}^m \tilde{\alpha}_n^m(\lambda_n) U_n^m(\omega), \: c_n \neq 0,\: \gamma > 1,\\
d_n \sum_{m= -n}^m \lambda_n\tilde{\beta}_n^m(\lambda_n)\gamma^{-1} V_n^m(\omega),\: d_n \neq 0,\: 0 < \gamma < 1.
\end{cases}$$
This completes the proof of (\ref{eq:5.1}).
\end{proof}
Passing to the analysis of $N(r)$, consider first the case $\gamma >1.$  The root $\lambda_n$ has algebraic multiplicity $2n + 1$ and to find a lower bound of $N(r)$ we apply the estimate
$$|\lambda_n| \leq \sqrt{\fr{n(n+1)}{\gamma^2 -1}} + a(\gamma) < \fr{n+1}{\sqrt{\gamma^2 -1}} + a(\gamma) \leq r$$
for $r \geq a(\gamma) + \fr{n_0(\gamma) + 1}{\sqrt{\gamma^2 - 1}}.$
Then
$$N(r) \geq \sum_{j =n_0(\gamma)}^{[(r - a(\gamma))\sqrt{\gamma^2 -1} - 1]} (2 j + 1) = (\gamma^2 - 1) r^2 + \oc_{\gamma} (r) + A_{\gamma}.$$
To get a upper bound for $N(r),$ we use the estimate
$$|\lambda_n| \geq \sqrt{\fr{n(n+1)}{\gamma^2 - 1}} - a(\gamma) > \fr{n}{\sqrt{\gamma^2 - 1}} - a(\gamma) \geq r$$
for 
$$n \geq (r + a(\gamma))\sqrt{\gamma^2 - 1} \geq 2 a(\gamma) \sqrt{\gamma^2 - 1} + n_0(\gamma) + 1,$$
hence
$$N(r) \leq \sum_{j = n_0(\gamma)}^{[(r + a(\gamma))\sqrt{\gamma^2 - 1}] + 1} (2j + 1) + D_{\gamma}=  (\gamma^2 - 1) r^2 + \oc_{\gamma} (r) + A_{\gamma}'.$$
 If $0 < \gamma < 1$, we have $\fr{1}{\gamma}>1$ and one applies our argument to the  the equation (\ref{eq:2.6}). This completes the proof of theorem 1.1\\

\end{document}